\documentclass{article}

\usepackage[T1]{fontenc}
\usepackage{lmodern}
\usepackage[a4paper]{geometry}
\usepackage{mathtools}
\usepackage{amsthm}
\usepackage{amssymb}
\usepackage[english]{babel}
\usepackage[colorlinks=true,citecolor=blue]{hyperref}

\hypersetup{pdfnewwindow}

\numberwithin{equation}{section}

\newtheorem{thm}{Theorem}[section]

\theoremstyle{remark}
\newtheorem{remark}{Remark}[section]
\newcommand{\ud}{\mathrm{d}}
\newcommand{\half}{{\textstyle{1\over2}}}

\newcommand{\R}{\mathbb{R}}

\newcommand{\eqdef}{\stackrel{\text{\tiny def}}{=}}

\begin{document}

%%%%%%%%%%%%%%%%%%%%%%%%%%%%%%%%%%%%%%%%%%%%%%%%%%%%%%%%%%%%%%%%%%%%%%%%
%%%%%%%%%%%%%%%%%%%%%%%%%%%%%%%%%%%%%%%%%%%%%%%%%%%%%%%%%%%%%%%%%%%%%%%%

\title{  \scalebox{0.93}{\bf On the singular limit of the Camassa--Holm equation}
}
  
\author{Billel Guelmame\thanks{New York University Abu Dhabi, United Arab Emirates,
\texttt{billel.guelmame@nyu.edu}}
}

%%%%%%%%%%%%%%%%%%%%%%%%%%%%%%%%%%%%%%%%%%%%%%%%%%%%%%%%%%%%%%%%%%%%%%%%
%%%%%%%%%%%%%%%%%%%%%%%%%%%%%%%%%%%%%%%%%%%%%%%%%%%%%%%%%%%%%%%%%%%%%%%%

\maketitle
%%%%%%%%%%%%%%%%%%%%%%%%%%%%%%%%%%%%%%%%%%%%%%%%%%%%%%%%%%%%%%%%%%%%%%%%

\begin{abstract}
We study the singular limit of the Camassa--Holm equation as the length scale $\ell$ tends to zero. 
Although the formal limit is the Burgers equation with flux $3u^2/2$, we show that, for a class of smooth initial data, convergence to entropy solutions fails after the shock formation, even along subsequences in local space-time $L^1$.
\end{abstract} 

%%%%%%%%%%%%%%%%%%%%%%%%%%%%%%%%%%%%%%%%%%%%%%%%%%%%%%%%%%%%%%%%%%%%%%%%
\medskip

 {\bfseries AMS Classification:} 35Q35; 35L65; 35B25. 
\medskip

{\bfseries Key words:} Camassa--Holm equation; singular limit; Burgers equation; entropy solutions.
\tableofcontents

%%%%%%%%%%%%%%%%%%%%%%%%%%%%%%%%%%%%%%%%%%%%%%%%%%%%%%%%%%%%%%%%%%%%%%%%
\section{Introduction}
%%%%%%%%%%%%%%%%%%%%%%%%%%%%%%%%%%%%%%%%%%%%%%%%%%%%%%%%%%%%%%%%%%%%%%%%

We study the Camassa--Holm equation
\begin{equation}\label{CH high der}
u^\ell_t + 3 u^\ell u^\ell_x = \ell^2 \left( u^\ell_{xxt} + 2 u^\ell_x u^\ell_{xx} + u^\ell u^\ell_{xxx} \right),
\end{equation}
posed on $\R$, with $u^\ell(0,x)=u_0(x)$ and $\ell>0$.
The Camassa--Holm (CH) equation was introduced in \cite{CamassaHolm1993} as shallow-water model.
Its nonlinear transport and nonlocal structure allow both global smooth solutions and finite-time wave breaking: the solution remains bounded, while its slope becomes unbounded from below  \cite{ConstantinEscher1998Global,ConstantinEscher1998WaveBreaking}.
Here, we are interested in the behavior of solutions to \eqref{CH high der} when the length scale $\ell$ tends to zero.

Formally setting $\ell=0$ in \eqref{CH high der} gives the Burgers equation
\begin{equation}\label{B}
u_t + \left(\tfrac32 u^2 \right)_x = 0.
\end{equation}
For smooth initial data, the limit $\ell \to 0$ is justified on a common interval of classical existence. 
More precisely, Li, Yu, and Zhu \cite{LiYuZhu2023} proved that, for $u_0 \in H^s(\R)$ with $s>3/2$, there exists a time $T>0$, independent of $\ell \in (0,1)$, such that
\begin{equation*}
u^\ell \longrightarrow u \qquad \text{in }L^\infty([0,T]; H^s(\R)).
\end{equation*}
In a subsequent work \cite{LiYuZhu2024Nonuniform}, the same authors showed that this convergence is not uniform with respect to the initial datum in Sobolev spaces. These two results concern the smooth flows on a common short time interval. 

Related singular limits have been studied for viscous Camassa--Holm-type equations. 
Coclite and Karlsen \cite{CocliteKarlsen2006}, and later Coclite and di Ruvo \cite{CocliteDiRuvo2015}, proved convergence to the entropy solution of the Burgers equation requiring the Camassa--Holm regularization parameter $\ell$ to tend to zero sufficiently rapidly relative to the viscosity parameter.
These results rely essentially on the presence of viscosity and therefore do not apply to the singular limit of \eqref{CH high der}, in which no viscosity is present.

The question becomes different beyond the classical blow-up time of the Burgers equation. 
To state the result, define the Helmholtz operator and its Green function by
\begin{equation*}
\mathcal{H}_\ell = 1 - \ell^2 \partial_x^2, \qquad G_\ell(x) = \frac1{2\ell} \mathrm{e}^{-|x|/\ell},
\end{equation*}
so that $\mathcal{H}_\ell^{-1} v = G_\ell \ast v$.
With the momentum $m^\ell \eqdef \mathcal{H}_\ell u^\ell$, equation \eqref{CH high der} takes the form
\begin{equation}\label{momentum eq CH}
m_t^\ell + u^\ell m_x^\ell + 2 u_x^\ell m^\ell = 0.
\end{equation}
Applying $\mathcal{H}_\ell^{-1}$ to \eqref{CH high der} also gives
\begin{equation}\label{CH}
u_t^\ell + u^\ell u_x^\ell + \ell^2 P_x^\ell + Q_x^\ell  = 0,
\end{equation}
where
\begin{equation*}
P^\ell = G_\ell \ast \tfrac12 (u_x^\ell)^2 , \qquad Q^\ell = G_\ell \ast (u^\ell)^2.
\end{equation*}
Smooth solutions of \eqref{CH high der} conserve the $H^1$-like energy
\begin{equation*}
	\int_\R ((u^\ell)^2+\ell^2 (u_x^\ell)^2)\, \ud x.
\end{equation*}
The momentum equation preserves the sign of $m^\ell$, while the nonlocal formulation \eqref{CH} allows to study the existence of solutions and also to study the singular limit $\ell \to 0$.
We now state the main result of the paper.

%%%%%%%%%%%%%%%%%%%%%%%%%%%%%%%%%%%%%%%%%%%%%%%%%%%%%%%%%%%%%%%%%%%%%%%%
\begin{thm}\label{thm:main}
	Let $u_0 \in H^3(\R) \cap L^1(\R)$ be a non-trivial initial datum. Assume that there exists $\ell_0 \in (0,1)$ such that
\begin{equation*}
	m_0^\ell \eqdef u_0 - \ell^2 u_0'' \geqslant 0 \quad \text{(or $m_0^\ell \leqslant 0$)} \quad \text{for any } \ell \in (0,\ell_0).
\end{equation*}
Let $u^\ell$ be the corresponding global solution of \eqref{CH}, and let $u$ be the unique entropy solution of \eqref{B} with initial datum $u_0$. 
Set
\begin{equation*}
T_{\mathrm B}^* \eqdef -\frac{1}{3\inf_{x\in\R}u_0'(x)}\in(0,\infty).
\end{equation*}
Then, for every $t_2>t_1 \geqslant T_{\mathrm B}^*$, there is no sequence $\ell_n\downarrow0$ such that
\begin{equation}\label{convergence in L1}
u^{\ell_n} \to u \qquad \text{strongly in } L^1_{\mathrm loc}\bigl([t_1,t_2]\times \R \bigr).
\end{equation}
In particular, the Camassa--Holm solutions do not, in general, converge to the entropy solution of the Burgers equation after the shock formation.
\end{thm}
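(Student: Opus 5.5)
The plan is to argue by contradiction. The mechanism is that Camassa--Holm conserves two quantities that the entropy solution of \eqref{B} must dissipate after $T_{\mathrm B}^*$. The sign condition on $m^\ell$ forces the two hidden reservoirs that absorb this dissipation to be in a ratio incompatible with the Rankine--Hugoniot/entropy structure of \eqref{B}. I treat $m_0^\ell\geqslant0$; the other case follows from $u\mapsto-u$.

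\textbf{Step 1: a priori information and local balance laws.} Since \eqref{momentum eq CH} preserves the sign of $m^\ell$, we have $m^\ell\geqslant0$ for all $t$. Hence $u^\ell=G_\ell\ast m^\ell\geqslant0$, and from $G_\ell'=\mp G_\ell/\ell$ we get $\ell|u^\ell_x|\leqslant u^\ell$. We also have conservation of
\begin{equation*}
\int_\R u^\ell\,\ud x, \qquad E^\ell=\int_\R\bigl((u^\ell)^2+\ell^2(u^\ell_x)^2\bigr)\ud x, \qquad F^\ell=\int_\R\bigl((u^\ell)^3+\ell^2u^\ell(u^\ell_x)^2\bigr)\ud x .
\end{equation*}
Here $E^\ell,F^\ell\to\int u_0^2,\int u_0^3$ since $u_0\in H^3$. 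I would write these as local conservation laws $\partial_t e^\ell+\partial_x\Phi^\ell=0$ and $\partial_t f^\ell+\partial_x\Psi^\ell=0$, using \eqref{CH}. The fluxes involve $u^\ell$, $\ell^2P^\ell$ and $Q^\ell$.

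\textbf{Step 2: passing to the limit with defect measures.} Assume \eqref{convergence in L1} along $\ell_n$. After extracting, set $\ell_n^2(u_x^{\ell_n})^2\rightharpoonup\mu$ and $\ell_n^2u^{\ell_n}(u_x^{\ell_n})^2\rightharpoonup\nu$ weakly-$*$ as nonnegative measures on $[t_1,t_2]\times\R$. Then $Q^{\ell_n}\to u^2$ and $\ell_n^2P^{\ell_n}\to\mu/2$ in distributions, since $G_\ell$ is an approximate identity. The limit balance laws read
\begin{equation*}
\partial_t(u^2+\mu)+\partial_x(2u^3+\cdots)=0,\qquad \partial_t(u^3+\nu)+\partial_x\bigl(\tfrac94u^4+\cdots\bigr)=0 .
\end{equation*}
In the first equation $u$ satisfies \eqref{B}. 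So the entropy dissipation of $\eta=u^2$, resp.\ $\eta=u^3$, is exactly $\partial_t\mu+\partial_x(\ldots)$, resp.\ $\partial_t\nu+\partial_x(\ldots)$. The pointwise bound $\ell|u_x|\leqslant u$ and the strong convergence of $u$ should then give $\nu\leqslant \bar u\,\mu$ in an appropriate sense, where $\bar u$ is the upper trace. This localises at the shock curve, whose existence for $t\geqslant T_{\mathrm B}^*$ comes from the definition of $T_{\mathrm B}^*$ and standard Burgers theory.

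\textbf{Step 3: contradiction at a shock.} At a shock with left/right states $a>b\geqslant0$ and speed $s=\tfrac32(a+b)$, a direct computation gives the two dissipation rates
\begin{equation*}
D_2=\tfrac12(a-b)^3, \qquad D_3=\tfrac34(a+b)(a-b)^3 .
\end{equation*}
So the ratio of dissipations is $D_3/D_2=\tfrac32(a+b)$. The defect bound forces $D_3/D_2\leqslant a$, but $\tfrac32(a+b)>a$, which is a contradiction. Since the entropy solution has a genuine shock on every time interval $[t_1,t_2]$ with $t_1\geqslant T_{\mathrm B}^*$, this rules out any subsequence.

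\textbf{Main obstacle.} Steps 2 and 3 need uniform control: an $L^\infty$ (or at least $L^3_{\mathrm{loc}}$) bound on $u^\ell$ independent of $\ell$, so that cubic and quartic fluxes converge. They also need a rigorous localisation of the inequality $\nu\leqslant u\mu$ at the shock, where $u$ is discontinuous. For the first point I would try to exploit $m\geqslant0$ together with conservation of $\int m$ and $F^\ell$, bounding the cubic flux nonlocally through $Q^\ell$. For the second, I would test the two limit balance laws against a common thin cutoff around the shock curve and compare them.
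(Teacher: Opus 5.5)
There is a genuine gap. Steps 2 and 3 carry the whole contradiction, and they rest on three claims that are not established and, as stated, do not close.

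(i) To pass to the limit in the cubic balance law you need convergence of its flux, which contains quartic terms such as $\tfrac94 (u^\ell)^4$. The only uniform bounds your argument produces, from $\int u^\ell$, $F^\ell$ and $\ell|u^\ell_x|\leqslant u^\ell$, are in $L^p$ for $p\leqslant 3$. So the second limit law is not available.

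(ii) The bound $\nu\leqslant\bar u\,\mu$ at the shock does not follow from $\ell|u^\ell_x|\leqslant u^\ell$. It would need $u^{\ell_n}\leqslant \bar u+o(1)$ on the set where $\ell_n^2(u^{\ell_n}_x)^2$ concentrates. Strong $L^1_{\mathrm{loc}}$ convergence gives no pointwise control of $u^{\ell_n}$ on sets of vanishing measure. Moreover, $\ell^2(u^\ell_x)^2\leqslant (u^\ell)^2$ is bounded in $L^{3/2}$, so $\mu$ is an $L^{3/2}$ function and gives no mass to the shock curve: the reservoirs do not sit on the shock.

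(iii) Even granting an inequality between the accumulated defects, it does not bound the ratio of their production rates $D_3/D_2$. For $t_1>T_{\mathrm B}^*$ the energy defect at time $t_1$ is already $\int u_0^2-\int u(t_1)^2>0$, with unknown distribution. Both defects are also transported by unknown flux defects. A large $\mu(t_1)$ with a small $\nu(t_1)$ is compatible with $\nu\leqslant a\mu$ at all later times, even though $D_3>aD_2$. So $\tfrac32(a+b)>a$ yields no contradiction.

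The missing idea is one you already wrote down in Step 2: $\ell_n^2P^{\ell_n}\rightharpoonup \tfrac12\mu$ and $Q^{\ell_n}\rightharpoonup u^2$. Insert these into the nonlocal equation \eqref{CH} itself, not into the energy or cubic laws. The limit is $u_t+(\tfrac32u^2)_x=-\tfrac12\mu_x$ on $(t_1,t_2)\times\R$. Since $u$ solves \eqref{B}, this gives $\mu_x=0$. Because $\mu$ has finite spatial mass for a.e.\ $t$ (energy bound), it follows that $\mu=0$.

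On the other hand, conservation of $E^\ell$ together with the strict energy loss of the entropy solution for $t>T_{\mathrm B}^*$ forces $\mu$ to have positive total mass on $(t_1,t_2)\times\R$. For this you need two ingredients:
\begin{itemize}
\item the uniform $L^3$ bound, which your $F^\ell$ argument does give, to upgrade $L^1_{\mathrm{loc}}$ convergence to $L^2_{\mathrm{loc}}$ convergence;
\item energy tightness uniform in $\ell$. You get it by testing the local energy law with a cutoff $\chi_R$ and bounding the flux $2\ell^2u^\ell P^\ell+2u^\ell Q^\ell+\ell^2u^\ell(u^\ell_x)^2$ in $L^1$ via H\"older and the $L^3$ bounds.
\end{itemize}
Tightness yields convergence in $L^2([t_1,t_2]\times\R)$ and prevents the defect mass from escaping to spatial infinity.

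This is the paper's argument. It uses no cubic flux, no $L^\infty$ bound, and no localisation at the shock.
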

%%%%%%%%%%%%%%%%%%%%%%%%%%%%%%%%%%%%%%%%%%%%%%%%%%%%%%%%%%%%%%%%%%%%%%%%

Under the sign condition in Theorem \ref{thm:main}, the CH equation has a unique global solution \cite{ConstantinEscher1998Global,RodriguezBlanco2001}, see also Theorem \ref{thm:CH-global-momentum} below.
On another side, the time $T_{\mathrm B}^*$ is the blow-up time of smooth solutions to the Burgers equation and it is finite because a nontrivial $H^3(\R)$ function, which tends to zero at both ends of the real line, must have a negative slope somewhere. 
The conclusion concerns convergence to the unique entropy solution. 
This theorem does not assert that the Camassa--Holm family has no strongly convergent subsequence with a different limit.

%%%%%%%%%%%%%%%%%%%%%%%%%%%%%%%%%%%%%%%%%%%%%%%%%%%%%%%%%%%%%%%%%%%%%%%%
\begin{remark}
An explicit initial datum satisfying the assumptions of Theorem \ref{thm:main} is
\begin{equation*}
	u_0(x)= \mathrm{sech} (x)=\frac{2}{\mathrm{e}^x + \mathrm{e}^{-x}}, 
\end{equation*}
and therefore 
\begin{equation*}
	u'_0(x)= \frac{-2(\mathrm{e}^x - \mathrm{e}^{-x})}{(\mathrm{e}^x + \mathrm{e}^{-x})^2}, 
	\qquad 
	u''_0(x)= \frac{-2}{\mathrm{e}^x + \mathrm{e}^{-x}} +\frac{4(\mathrm{e}^x - \mathrm{e}^{-x})^2}{(\mathrm{e}^x + \mathrm{e}^{-x})^3}.
\end{equation*}
Using that $(\mathrm{e}^x - \mathrm{e}^{-x})^2=(\mathrm{e}^x + \mathrm{e}^{-x})^2-4$, we obtain
\begin{equation*}
	m_0^\ell = u_0(x) - \ell^2 u''_0(x) = (1-\ell^2) \mathrm{sech} (x) + 2 \ell^2 \mathrm{sech}^3 (x) \geqslant 0 \quad \text{for any } \ell \in (0,1).
\end{equation*}
\end{remark}
%%%%%%%%%%%%%%%%%%%%%%%%%%%%%%%%%%%%%%%%%%%%%%%%%%%%%%%%%%%%%%%%%%%%%%%%

We briefly explain the proof. 
The sign condition gives the pointwise estimate $\ell |u_x^\ell| \leqslant |u^\ell|$. 
Together with conservation of mass and of the cubic Hamiltonian
\begin{equation*}
H_\ell(t) \eqdef \int_\R \bigl((u^\ell)^3 +\ell^2 u^\ell(u_x^\ell)^2 \bigr)\, \ud x,
\end{equation*}
this yields uniform $L^p(\R)$ bounds for $u^\ell$ and $\ell u_x^\ell$, for $p \in [1,3]$.
These bounds also control the local energy flux in $L^1(\R)$.
Testing the local energy equation with spatial cutoffs then shows that the energy outside $[-R,R]$ tends to zero as $R \to \infty$, uniformly in $\ell$ on bounded time intervals.

Suppose now that \eqref{convergence in L1} holds, and write
$I=[t_1,t_2]$. 
The uniform $L^3$ bound upgrades this convergence to strong local $L^2$ convergence. 
The energy tightness then gives strong convergence in $L^2(I \times \R)$. 
Since the Burgers entropy solution has lost quadratic energy after $T_{\mathrm B}^*$, whereas the CH solutions conserve their total energy, a strictly positive amount of energy must remain in $\ell^2(u_x^\ell)^2$. 
More precisely, after extraction of a subsequence, $w^\ell \eqdef \ell^2 (u_x^\ell)^2$ converge weakly to a nonnegative function $w$ with positive mass.
The energy tightness is used a second time here, it prevents this mass from escaping to spatial infinity. 

Finally, strong $L^2$ convergence gives $Q^\ell \rightharpoonup u^2$ and $\ell^2 P^\ell \rightharpoonup \half w$ in the sense of distributions.
Passing to the limit in \eqref{CH}, we obtain
\begin{equation*}
u_t + \left(\tfrac32 u^2\right)_x = -\half w_x \qquad \text{on } (t_1,t_2) \times \R.
\end{equation*}
Since $u$ solves \eqref{B}, we must have $w_x=0$.
Thus, $w$ must vanish because it has a finite mass for almost any time.
This contradicts the positive mass of $w$.

For comparison, consider the Hamiltonian regularized Burgers (rB) equation introduced in \cite{GuelmameJuncaClamondPego2024}
\begin{equation*}
v^\ell_t + v^\ell v^\ell_x = \ell^2 \left( v^\ell_{xxt} + 2 v^\ell_x v^\ell_{xx} + v^\ell v^\ell_{xxx} \right).
\end{equation*}
The right-hand side is the same as in \eqref{CH high der}, but the coefficient of the transport term is different. 
Both equations have a local smooth theory with a time of existence uniform in $\ell$ for fixed sufficiently regular data, and their smooth flows converge locally in time to the corresponding classical Burgers flows \cite{LiYuZhu2023,GuelmameJuncaClamondPego2024}.
Their smooth solutions conserve the same form of $H^1$-like energy.
However, the rB equation is Galilean invariant, whereas the CH equation is not. 
Moreover, only one Hamiltonian structure is known for the rB equation, in contrast with the bi-Hamiltonian structure of the CH equation.

A further difference is particularly relevant here. 
Like the Burgers equation, rB develops wave breaking in finite time whenever a smooth initial datum has a negative slope somewhere, with lifespan bounds independent of $\ell$ \cite{GuelmameJuncaClamondPego2024}.
For the CH equation, a negative slope alone does not force wave breaking, initial momentum of one sign gives global smooth solutions (see Theorems \ref{thm:Blow-up} and \ref{thm:CH-global-momentum} below).
This difference is precisely what allows the conservative dynamics used in Theorem \ref{thm:main} to persist beyond Burgers shock formation.

Both equations admit global dissipative weak solutions for $H^1$ initial data
\cite{BressanConstantin2007Dissipative,GuelmameJuncaClamondPego2024,Guelmame2024Hamiltonian}.
For the dissipative CH flow, the available one-sided slope bounds depend on $\ell$ and may blow up as $\ell \to 0$. 
In contrast, dissipative rB solutions satisfy the uniform Oleinik estimate
\begin{equation*}
v_x^\ell(t,x) \leqslant \tfrac2t \qquad \text{for almost every }(t,x) \in (0,\infty) \times \R.
\end{equation*}
This estimate is an important ingredient in the analysis of the rB limit as $\ell \to 0$.

Suitable dissipative solutions recover the entropy solution of the Burgers equation
\begin{equation}\label{cB}
v_t + \left(\half v^2\right)_x = 0	
\end{equation}
even after shock formation \cite{Guelmame2024Hamiltonian}.
Indeed, for $v_0 \in H^1(\R)$ with $v_0' \in L^1(\R)$ and $v_0' \leqslant M <\infty$, the dissipative solutions obtained by the vanishing viscosity method satisfy \cite{GuelmameHouamed2024}
\begin{equation*}
\|v^\ell-v\|_{L^\infty([0,T];L^p(\R))}
\leqslant C\ell^{1/(2p)},\qquad 0<\ell\leqslant1,
\end{equation*}
for every $T>0$ and $1 \leqslant p < \infty$, where $C$ is independent of $\ell$, and $v$ is the unique entropy solution of the Burgers equation \eqref{cB}.
Thus the common energy conservation law for smooth solutions does not determine the singular limit. 
The possibility of wave breaking and the dissipation in the continuation also matter. 
The rB equation admits dissipative solutions converging to entropy solutions of the Burgers equation, while the global smooth CH flows considered here fail to converge to the Burgers entropy solution even in $L^1_{\mathrm{loc}}$.

Section \ref{sec:preliminaries} recalls the needed properties of the Burgers and Camassa--Holm equations, while Section \ref{sec:proof} is devoted to proving Theorem \ref{thm:main}.

%%%%%%%%%%%%%%%%%%%%%%%%%%%%%%%%%%%%%%%%%%%%%%%%%%%%%%%%%%%%%%%%%%%%%%%%
\section{Preliminaries}\label{sec:preliminaries}
%%%%%%%%%%%%%%%%%%%%%%%%%%%%%%%%%%%%%%%%%%%%%%%%%%%%%%%%%%%%%%%%%%%%%%%%

All equations below are posed on $\R$. 
The parameter $\ell>0$ is fixed unless a limit as $\ell\to0$ is explicitly considered. 
We recall the results needed for the proof below.

%%%%%%%%%%%%%%%%%%%%%%%%%%%%%%%%%%%%%%%%%%%%%%%%%%%%%%%%%%%%%%%%%%%%%%%%
\subsection{The Burgers equation}
%%%%%%%%%%%%%%%%%%%%%%%%%%%%%%%%%%%%%%%%%%%%%%%%%%%%%%%%%%%%%%%%%%%%%%%%

We first recall that for any smooth initial datum, the Burgers equation admits a unique local smooth solution. 
Moreover, if the initial datum is decreasing somewhere, then, the corresponding solution blows up in finite time. 

%%%%%%%%%%%%%%%%%%%%%%%%%%%%%%%%%%%%%%%%%%%%%%%%%%%%%%%%%%%%%%%%%%%%%%%%
\begin{thm}[Classical Burgers flow and breakdown]\label{thm:Burgers-LWP}
Let $s>\frac32$ and $u_0\in H^s(\R)$.  Equation \eqref{B} has a unique
maximal solution
\begin{equation*}
 u \in C\bigl([0,T_{\mathrm B}^*); H^s(\R)\bigr)  \cap C^1 \bigl([0,T_{\mathrm B}^*); H^{s-1}(\R)\bigr).
\end{equation*}
Moreover, 
\begin{equation*}
 T_{\mathrm B}^*=
 \begin{cases}
  -\dfrac{1}{3 \inf_{x \in \R} u_0'(x)},     & \text{if } \inf_{x \in \R}u_0'(x) < 0, \\
  \infty, & \text{if } \inf_{x \in \R} u_0'(x)\geqslant 0.
 \end{cases}
\end{equation*}
In the first case, we have
\begin{equation*}
 \lim_{t\uparrow T_{\mathrm B}^*}\inf_{x\in\R}u_x(t,x)=-\infty.
\end{equation*}
\end{thm}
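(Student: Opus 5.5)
The plan is the method of characteristics for the inviscid Burgers equation \eqref{B}, combined with a standard local well-posedness argument in $H^s$ for $s>3/2$.

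\emph{Local well-posedness.} I would take local existence, uniqueness and the regularity $u\in C([0,T);H^s)\cap C^1([0,T);H^{s-1})$ as the classical Kato theory for symmetric hyperbolic systems. The steps are as follows.
\begin{itemize}
\item Mollify the data and derive the $H^s$ energy estimate using the Kato--Ponce commutator bound
\begin{equation*}
\tfrac{\ud}{\ud t}\|u\|_{H^s}^2\leqslant C\|u_x\|_{L^\infty}\|u\|_{H^s}^2 .
\end{equation*}
\item Pass to the limit by compactness, then upgrade to time continuity in $H^s$ via the Bona--Smith argument.
\item This gives the continuation criterion: the solution extends as long as $\int_0^T\|u_x\|_{L^\infty}\,\ud t<\infty$. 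The criterion follows from Gronwall applied to the energy estimate.
\end{itemize}
Since $s>3/2$, the embedding $H^s\subset C^1$ holds, so solutions are classical.

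\emph{Characteristics.} Let $X(t,\xi)$ solve $\dot X=3u(t,X)$ with $X(0,\xi)=\xi$. Then $u(t,X(t,\xi))=u_0(\xi)$ is constant along characteristics. Differentiating the equation gives, for $q=u_x(t,X)$,
\begin{equation*}
\dot q=-3q^2 ,
\end{equation*}
so that
\begin{equation*}
u_x(t,X(t,\xi))=\frac{u_0'(\xi)}{1+3tu_0'(\xi)} .
\end{equation*}
Because $\xi\mapsto X(t,\xi)$ is a diffeomorphism of $\R$ while $X_\xi=1+3tu_0'>0$, we get
\begin{equation*}
\inf_x u_x(t,x)=\inf_\xi \frac{u_0'(\xi)}{1+3tu_0'(\xi)} ,
\end{equation*}
since $a\mapsto a/(1+3ta)$ is increasing on $\{1+3ta>0\}$. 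Moreover $\sup_x u_x\leqslant \sup u_0'$, which is finite.

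\emph{The two cases.}
\begin{itemize}
\item If $\inf u_0'\geqslant 0$, then $u_x$ stays in $[0,\sup u_0']$ for all time. The continuation criterion then gives $T_{\mathrm B}^*=\infty$.
\item If $m=\inf u_0'<0$, then for $t<-1/(3m)$ we have $\|u_x\|_{L^\infty}\leqslant \max\bigl(\sup u_0',\,|m|/(1+3tm)\bigr)$. This is integrable on every $[0,T]$ with $T<-1/(3m)$, so the solution exists there.
\end{itemize}

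\emph{Blow-up at $T_{\mathrm B}^*$.} Conversely, $\inf_x u_x(t)=m/(1+3tm)\to-\infty$ as $t\uparrow -1/(3m)$. This needs $m$ to be attained or approached, and the formula above gives the infimum in either case. Since $H^s\subset C^1$, the $H^s$ norm must then blow up, so $T_{\mathrm B}^*=-1/(3m)$ exactly, and the stated limit holds.

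\emph{Main obstacle.} The only delicate point is time continuity in $H^s$ for non-integer $s$, that is, the Bona--Smith step. Since this is textbook material, I would cite Kato's theory rather than reprove it.
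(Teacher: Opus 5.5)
Your proof is correct. The paper does not prove this statement; it only recalls it as a classical result. So there is no competing argument to compare against.

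Your route is the standard one:
\begin{itemize}
\item Kato's local $H^s$ theory, with the continuation criterion $\int_0^T\|u_x\|_{L^\infty}\,\ud t<\infty$.
\item The characteristic formula $u_x(t,X(t,\xi))=u_0'(\xi)/(1+3tu_0'(\xi))$.
\item The monotonicity of $a\mapsto a/(1+3ta)$.
\end{itemize}
These combine correctly: first $T_{\mathrm B}^*\geqslant -1/(3\inf u_0')$ by continuation, then equality from $\inf_x u_x\to-\infty$ together with $H^s\subset C^1$.

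One small regularity point. Deriving $\dot q=-3q^2$ by differentiating the equation uses $u_{xx}$, which is not available for $\tfrac32<s\leqslant\tfrac52$. You can avoid this as follows:
\begin{itemize}
\item Differentiate $u(t,X(t,\xi))=u_0(\xi)$ in $\xi$ to get $u_x(t,X)\,X_\xi=u_0'(\xi)$.
\item Combine this with $\partial_t X_\xi=3u_x(t,X)\,X_\xi$, which gives $X_\xi=1+3tu_0'(\xi)$ directly.
\end{itemize}
This only needs $u\in C^1([0,T]\times\R)$, which holds because $u\in C([0,T];H^s)\cap C^1([0,T];H^{s-1})$ with $s-1>\tfrac12$. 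Alternatively, approximate by smooth data and use continuous dependence.
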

%%%%%%%%%%%%%%%%%%%%%%%%%%%%%%%%%%%%%%%%%%%%%%%%%%%%%%%%%%%%%%%%%%%%%%%%

After the blow-up time, we can consider weak solutions in the sense of distributions. 
Distributional solutions need not be unique. 
Imposing the entropy inequalities recovers uniqueness.

%%%%%%%%%%%%%%%%%%%%%%%%%%%%%%%%%%%%%%%%%%%%%%%%%%%%%%%%%%%%%%%%%%%%%%%%
\begin{thm}[Entropy solutions of the Burgers equation]\label{thm:Entropy solutions}
Let $u_0 \in L^\infty(\R)$. Then there exists a unique global entropy solution of the Burgers equation \eqref{B}.
If $u_0 \in L^p(\R)$ for some $p \in [1,\infty]$, then 
\begin{equation}\label{Lp estimates}
\|u\|_{L^\infty([0,\infty); L^p(\R))}	\leqslant \|u_0\|_{L^p(\R)}
\end{equation}
If, in addition, $u_0\in H^3(\R)\cap L^1(\R)$ is nontrivial, then $u(t,\cdot)$ has a jump for any $t>T_{\mathrm B}^*$. 
Moreover, for any $t>  T_{\mathrm B}^*$, we have the loss of the energy 
\begin{equation*}
\|u(t)\|_{L^2(\R)}^2 < \|u_0\|_{L^2(\R)}^2 .
\end{equation*}
\end{thm}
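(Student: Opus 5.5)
The plan is to assemble the statement from classical results on scalar conservation laws with convex flux $f(u)=\tfrac32u^2$, spending most of the effort on the two qualitative claims (presence of jumps and strict energy loss). Existence and uniqueness for $u_0\in L^\infty(\R)$ is Kruzhkov's theorem. For \eqref{Lp estimates}:
\begin{itemize}
\item[] the case $p=\infty$ is the maximum principle;
\item[] the case $p=1$ follows from $L^1$-contraction against the zero solution;
\item[] for $1<p<\infty$, use the convex entropy $\eta(u)=|u|^p$. I would approximate it by smooth convex entropies, test the entropy inequality with a spatial cutoff $\chi(x/R)$, and let $R\to\infty$. The flux term vanishes in the limit because $u\in L^\infty_t(L^1\cap L^\infty)$, which gives $\frac{\ud}{\ud t}\int_\R|u|^p\,\ud x\leqslant0$ in the sense of distributions.
\end{itemize}

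For the jump at every $t>T_{\mathrm B}^*$, I would argue by contradiction using Dafermos' generalized characteristics. Recall that for Burgers with flux $\tfrac32u^2$ and $u_0\in H^3$, the entropy solution is smooth and coincides with the classical flow of Theorem~\ref{thm:Burgers-LWP} on $[0,T_{\mathrm B}^*)$. Fix $t>T_{\mathrm B}^*$ and suppose $u(t,\cdot)$ is continuous. Then every point $(t,x)$ has a unique backward characteristic, and it is genuine, i.e.\ the straight line $x=y+3t\,u_0(y)$ with $u(t,x)=u_0(y)$. The foot point $y(x)$ is nondecreasing because characteristics do not cross. Continuity of $u(t,\cdot)$ gives uniqueness of the foot point, so $y(\cdot)$ is continuous. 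It is also onto $\R$: the map $\Phi_t(y)=y+3tu_0(y)$ satisfies $\Phi_t(y)\to\pm\infty$, and every characteristic issuing from $y$ that has not entered a shock by time $t$ is attained. Hence $\Phi_t$ is a continuous increasing bijection, which forces $1+3t\,u_0'\geqslant0$ everywhere. This contradicts $1+3t\inf u_0'<0$ for $t>T_{\mathrm B}^*$. The delicate point, which I expect to be the main obstacle, is that some $y$ could be "swallowed" by shocks before time $t$. I would handle it by noting that shocks, once formed, are never destroyed for convex flux: generalized characteristics merge but never split forward in time. Consequently the set of foot points reachable at time $t$ misses an open interval where $\Phi_t$ fails to be monotone. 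If $y(\cdot)$ were continuous and nondecreasing onto its range, that range would be an interval containing the non-monotone region, which is impossible.

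For the energy loss, I would use that for $t>0$ the entropy solution is in $BV_{\mathrm{loc}}$ by the Oleinik bound $u_x\leqslant 1/(3t)$ together with $u\in L^1$. The entropy pair $(u^2,2u^3)$ then has dissipation measure concentrated on the jump set $J$, with density on a jump $u_-\to u_+$ (where $u_->u_+$) given by
\[
\mu=\tfrac12\,(u_--u_+)^3\,\mathcal{H}^1\llcorner J\ \geqslant 0 .
\]
Testing with a cutoff and letting $R\to\infty$ as above yields
\[
\|u(t)\|_{L^2(\R)}^2=\|u_0\|_{L^2(\R)}^2-\mu\bigl((0,t)\times\R\bigr).
\]
By the previous step, every time $s\in(T_{\mathrm B}^*,t)$ carries a jump of positive strength. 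These jumps lie on Lipschitz shock curves, so $\mu$ gives positive mass to $(T_{\mathrm B}^*,t)\times\R$, which yields the strict inequality.
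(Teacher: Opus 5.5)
Your proposal is correct and follows essentially the paper's route. The jump at each $t>T_{\mathrm B}^*$ comes from the same contradiction: a continuous, nondecreasing foot-point map $x\mapsto y(t,x)$ onto $\R$ would force its inverse $y\mapsto y+3t\,u_0(y)$ to be monotone. You phrase it with backward generalized characteristics rather than Lax--Oleinik minimizers (their feet coincide), and the strict energy loss uses the same entropy dissipation concentrated on jumps. One small correction: the density $\tfrac12(u_{-}-u_{+})^3$ is with respect to $\ud t$ along shock curves, and with respect to $\mathcal{H}^1$ it needs a factor $(1+\sigma^2)^{-1/2}$, which does not affect positivity.

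The ``swallowed foot points'' obstacle is not real. Your shock-persistence remedy also presupposes that a shock already exists before time $t$, which is what you are trying to prove. Instead, if $u(t,\cdot)$ is continuous, then $y(x)=x-3t\,u(t,x)$ is continuous with $|x-y(x)|\leqslant 3t\|u_0\|_{L^\infty(\R)}$, hence onto $\R$ by the intermediate value theorem, exactly as in the paper.
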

%%%%%%%%%%%%%%%%%%%%%%%%%%%%%%%%%%%%%%%%%%%%%%%%%%%%%%%%%%%%%%%%%%%%%%%%

%%%%%%%%%%%%%%%%%%%%%%%%%%%%%%%%%%%%%%%%%%%%%%%%%%%%%%%%%%%%%%%%%%%%%%%%
\begin{proof}
Existence, uniqueness, and \eqref{Lp estimates} follow from the classical entropy theory, see \cite{Kruzhkov1970,Bressan2000,Dafermos2016}.
We give the argument for the shock formation and the strict energy loss.

%%%%%%%%%%%%%%%%%%%%%%%%%%%%%%%%%%%%%%%%%%%%%%%%%%%%%%%%%%%%%%%%%%%%%%%%
\medskip
\noindent\textbf{Step 1. Shock formation.}
By the Lax--Oleinik formula, if $y(t,x)$ minimizes
\begin{equation}\label{Min}
	y \mapsto \int_0^y u_0(z)\, \ud z + \tfrac{(x-y)^2}{6t}, 
\end{equation}
then, for almost every $x$, the unique entropy solution is given by 
\begin{equation*}
	u(t,x) = \tfrac{x-y(t,x)}{3 t}.
\end{equation*}	
Since $y(t,x)$ is a minimizer of \eqref{Min}, it satisfies
\begin{equation}\label{xy relation}
	u_0(y(t,x)) = \tfrac{x-y(t,x)}{3t} \quad \iff \quad x= 3t u_0(y(t,x)) + y(t,x).
\end{equation}
Fix now $t>T_{\mathrm B}^*$ and assume, by contradiction, that the minimizer in \eqref{Min} is unique for every $x \in \R$.  
Then $x \mapsto y(t,x)$ is continuous. 
Indeed, for a sequence $x_n \to x$, the corresponding minimizers remain bounded by \eqref{xy relation}.
Every accumulation point minimizes \eqref{Min} at $x$, and uniqueness identifies it with $y(t,x)$.
Since $u_0$ is bounded, then \eqref{xy relation} implies that $y(t,x) \to \pm \infty$ as $x \to \pm \infty$ for any $t>0$. 
Therefore $y(t,\cdot)$ is surjective.

It is a standard property of the Lax--Oleinik formula that the map $x \mapsto y(t,x) $ is nondecreasing, then \eqref{xy relation} implies that it is injective. 
Then its inverse $y \mapsto 3 t u_0(y)+y$ is nondecreasing as well. 
On the other hand, since $t>T_{\mathrm B}^*$, there exists $y_0$ such that $3 t u_0'(y_0)+1<0$, which is a contradiction.

Therefore, for every $t > T_{\mathrm B}^*$, there exists $x \in \R$ for which the minimizer in \eqref{Min} is not unique. 
Let $y_-<y_+$ be the smallest and largest minimizers, the Lax--Oleinik formula gives
\begin{equation*}
	u(t,x-) = \frac{x-y_-}{3t} 	> \frac{x-y_+}{3t}	= u(t,x+).	
\end{equation*}
Thus $u(t,\cdot)$ has a discontinuous shock for every $t>T_{\mathrm B}^*$.

%%%%%%%%%%%%%%%%%%%%%%%%%%%%%%%%%%%%%%%%%%%%%%%%%%%%%%%%%%%%%%%%%%%%%%%%
\medskip
\noindent\textbf{Step 2. Strict energy loss.}
We prove that
\begin{equation*}
    \|u(t)\|_{L^2(\R)}^2 < \|u_0\|_{L^2(\R)}^2
    \qquad \text{for every } t>T_{\mathrm B}^*.
\end{equation*}
Recall that the flux and the quadratic entropy pair are
\begin{equation*}
    f(u) = \frac32 u^2, \qquad \eta(u) = \frac12 u^2, \qquad q(u)=u^3.
\end{equation*}
Entropy solutions of the Burger equation satisfy \cite[Theorem 6.2.6]{Dafermos2016}  
\begin{equation*}
    u \in BV_{\mathrm{loc}} ((0,\infty) \times \R), \qquad \text{and} \qquad u(t,\cdot) \in BV_{\mathrm{loc}} (\R)\ \text{for any } t>0.
\end{equation*}
The chain rule for BV functions therefore applies to the entropy pair $(\eta,q)$ \cite{Dafermos2016}.
It shows that the entropy production is concentrated on the jump set of $u$. 
Indeed, the continuous part vanishes by the equation $u_t + f(u)_x = 0$, see \cite[Theorem 11.13.1]{Dafermos2016}.

At a shock, denote the left and right traces by $u^-$ and $u^+$.
Since the solution is entropic, $u^->u^+$, and the Rankine--Hugoniot condition gives the shock speed
\begin{equation*}
    \sigma = \tfrac{f(u^+)-f(u^-)}{u^+-u^-} = \tfrac32 (u^-+u^+).
\end{equation*}
Writing $[g]=g^+-g^-$, we obtain
\begin{align*}
    [q] - \sigma [\eta]  &= (u^+)^3-(u^-)^3  -\tfrac34 (u^-+u^+) \bigl((u^+)^2-(u^-)^2\bigr) \\
    &= -\tfrac14 (u^--u^+)^3<0.
\end{align*}
By the entropy-production formula for uniformly convex scalar
conservation laws, recalled in \cite[Section 1]{BianchiniMarconi2017},
we have, for $0<a<t$,
\begin{equation*}
    \half \|u(t)\|_{L^2(\R)}^2 - \half \|u(a)\|_{L^2(\R)}^2 =    -\tfrac14 \int_a^t \sum_{x\in J(s)} \bigl(u(s,x-) - u(s,x+)\bigr)^3 \,\ud s,
\end{equation*}
where $J(s)$ denotes the set of jump points of $u(s,\cdot)$.
This identity follows by using Volpert's BV chain rule formula,  then integrating over $\R$ using cutoff functions and the estimation \eqref{Lp estimates} for $p=3$.
We also use that $u \in C([0,\infty); L^2(\R))$, which follows from the standard $L^1$ continuity and the $L^\infty$ bound.

Fix $t>T_{\mathrm B}^*$ and choose $a\in(T_{\mathrm B}^*,t)$.
By Step 1, a nonzero downward jump exists at every time $s\in(a,t)$, so the integral above is strictly positive.
Consequently,
\begin{equation*}
    \|u(t)\|_{L^2(\R)}^2 < \|u(a)\|_{L^2(\R)}^2 \leqslant \|u_0\|_{L^2(\R)}^2,
\end{equation*}
as claimed.
\end{proof}
%%%%%%%%%%%%%%%%%%%%%%%%%%%%%%%%%%%%%%%%%%%%%%%%%%%%%%%%%%%%%%%%%%%%%%%%

%%%%%%%%%%%%%%%%%%%%%%%%%%%%%%%%%%%%%%%%%%%%%%%%%%%%%%%%%%%%%%%%%%%%%%%%
\subsection{The Camassa--Holm equation}
%%%%%%%%%%%%%%%%%%%%%%%%%%%%%%%%%%%%%%%%%%%%%%%%%%%%%%%%%%%%%%%%%%%%%%%%

The local and global theory of the Camassa--Holm equation is classical, see \cite{ConstantinEscher1998Global,ConstantinEscher1998WaveBreaking,LiOlver2000,RodriguezBlanco2001,Yin2004,Yin2007}.

%%%%%%%%%%%%%%%%%%%%%%%%%%%%%%%%%%%%%%%%%%%%%%%%%%%%%%%%%%%%%%%%%%%%%%%%
\begin{thm}[Local well-posedness]\label{thm:CH-LWP}
Let $s>\frac32$, $\ell>0$ and $u_0\in H^s(\R)$. Then there exists a maximal time $T_\ell^*=T_\ell^*(u_0)>0$ such that the Camassa--Holm equation \eqref{CH} has a unique solution $u^\ell$ satisfying
\begin{equation*}
u^\ell \in C\bigl([0,T_\ell^*); H^s(\R)\bigr) \cap C^1\bigl([0,T_\ell^*); H^{s-1}(\R)\bigr).
\end{equation*}
Moreover, for any $T<T_\ell^*$, the map
\begin{equation*}
u_0 \longmapsto u^\ell
\end{equation*}
is continuous from a neighborhood of $u_0$ in $H^s(\R)$ into $C([0,T];H^s(\R))$. In addition, if $T_\ell^*<\infty$, then 
\begin{equation*}
\liminf_{t\uparrow T_\ell^*} \inf_{x\in\R}u_x^\ell(t,x) = -\infty.
\end{equation*}
\end{thm}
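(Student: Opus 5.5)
The natural route is the classical one for quasilinear hyperbolic-type equations, carried out on the nonlocal form \eqref{CH}. For fixed $\ell>0$ the map $u \mapsto \ell^2 P_x^\ell + Q_x^\ell$ is, by the smoothing of $G_\ell$, of order zero: $\partial_x G_\ell \ast$ maps $H^{s-1}$ into $H^{s}$ with constants depending on $\ell$. I would therefore apply Kato's semigroup theory for quasilinear equations $u_t + A(u)u = f(u)$, with $A(u)=u\partial_x$ and $f(u)=-\partial_x G_\ell\ast\bigl(\tfrac{\ell^2}{2}u_x^2+u^2\bigr)$. This requires checking the standard hypotheses: $X=H^{s-1}$, $Y=H^s$, and $\Lambda=(1-\partial_x^2)^{1/2}$ as isomorphism. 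Alternatively, one can use a Friedrichs mollification scheme together with a priori estimates. I would follow the mollification scheme, since it is more self-contained.

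\textbf{Step 1 (a priori estimate).} Apply $\Lambda^s$ to \eqref{CH}, multiply by $\Lambda^s u^\ell$ and integrate. The transport term is handled by the Kato--Ponce commutator estimate
\begin{equation*}
\bigl|\bigl([\Lambda^s,u]u_x,\Lambda^s u\bigr)_{L^2}\bigr| \leqslant C\|u_x\|_{L^\infty}\|u\|_{H^s}^2 ,
\end{equation*}
together with $(u\,\Lambda^s u_x,\Lambda^s u)_{L^2}=-\half\int u_x(\Lambda^s u)^2$. The nonlocal terms satisfy $\|f(u)\|_{H^s}\leqslant C_\ell(\|u\|_{L^\infty}+\|u_x\|_{L^\infty})\|u\|_{H^s}$, because $\partial_x G_\ell\ast$ gains one derivative and $H^{s-1}$ is an algebra for $s-1>\half$, or by Moser estimates. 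This yields
\begin{equation*}
\frac{\ud}{\ud t}\|u\|_{H^s}^2 \leqslant C_\ell\bigl(\|u\|_{L^\infty}+\|u_x\|_{L^\infty}\bigr)\|u\|_{H^s}^2 .
\end{equation*}
Since $s>3/2$, we have $\|u_x\|_{L^\infty}\lesssim\|u\|_{H^s}$, which gives a uniform existence time for the mollified problems. Compactness (Aubin--Lions) then gives existence in $L^\infty H^s\cap \mathrm{Lip}\, H^{s-1}$.

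\textbf{Step 2 (uniqueness and continuity).} Uniqueness follows from an $L^2$ (or $H^{s-1}$) energy estimate for the difference of two solutions, since the nonlocal term is Lipschitz in $H^{s-1}$ on bounded sets of $H^s$. Time continuity into $H^s$ and continuous dependence are the main obstacle, because the data-to-solution map is not Lipschitz in $H^s$. I would use the Bona--Smith argument: regularize the data as $J_\varepsilon u_0$, and compare solutions in $H^s$ using the $H^{s-1}$ difference bound together with the $H^{s+1}$ growth $\|J_\varepsilon u_0\|_{H^{s+1}}\lesssim\varepsilon^{-1}$. Then $C^1([0,T);H^{s-1})$ follows directly from the equation.

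\textbf{Step 3 (blow-up criterion).} Since $\|u\|_{L^\infty}\leqslant C_\ell\|u\|_{H^1}$ and the $H^1$-like energy is conserved, Step 1 and Gronwall show that the $H^s$ norm stays bounded as long as $\int_0^t\|u_x\|_{L^\infty}$ is bounded. It remains to reduce control of $|u_x|$ to a lower bound. For $s\geqslant 3$, first work with smooth solutions and differentiate \eqref{CH} in $x$, using $\ell^2\partial_x^2 G_\ell\ast g = G_\ell\ast g - g$:
\begin{equation*}
u_{xt}+u u_{xx} = -\half u_x^2+\tfrac{1}{\ell^2}u^2-\ell^{-2}Q+\ldots
\end{equation*}
Here the right side equals $-\half u_x^2$ plus terms bounded by $C_\ell\|u\|_{H^1}^2$. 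Along characteristics, $\sup u_x$ is then bounded above, so $\|u_x\|_{L^\infty}$ can blow up only through $\inf u_x\to-\infty$. Density and the continuity from Step 2 transfer this conclusion to $s>3/2$, since the maximal time is independent of $s$ by the same Gronwall argument.
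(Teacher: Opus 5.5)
Your sketch is correct and is essentially the classical argument that the paper does not reprove but simply cites. For well-posedness, the cited works use either Kato's quasilinear semigroup theory or a regularization scheme with Kato--Ponce/Moser estimates and a Bona--Smith argument; for the blow-up criterion, they use the Gronwall bound in $\|u_x^\ell\|_{L^\infty}$ together with the Riccati-type upper bound on $u_x^\ell$ along characteristics, as in Constantin--Escher.

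The one point to tighten is uniqueness. For $\frac32<s<2$ the $L^2$ estimate of the difference $w=u-v$ does not close, because the contribution of $\partial_x G_\ell\ast\bigl((u_x+v_x)w_x\bigr)$ can only be bounded by $\|w\|_{L^2}^2$ when $u_{xx},v_{xx}\in L^2$. You therefore have to use the $H^{s-1}$ estimate you mention, which does close because $H^{s-1}\cdot H^{s-2}\subset H^{s-2}$ for $s>\frac32$.
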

%%%%%%%%%%%%%%%%%%%%%%%%%%%%%%%%%%%%%%%%%%%%%%%%%%%%%%%%%%%%%%%%%%%%%%%%

The maximal existence time $T_\ell^*$ is independent of $s>3/2$, due to the persistence of the Sobolev regularity.
The local well-posedness for $s>\frac32$ is classical, see \cite{ConstantinEscher1998Global,Yin2007}, Theorems 4.4, 4.5 and 6.2 in \cite{LiOlver2000} and Theorems 3.1 and 3.2 in \cite{RodriguezBlanco2001}.

Since our goal is to study the singular limit $\ell \to 0$, a first step is to obtain a time of existence uniform in $\ell$.  For that purpose, we recall the following theorem from \cite{LiYuZhu2023}.

%%%%%%%%%%%%%%%%%%%%%%%%%%%%%%%%%%%%%%%%%%%%%%%%%%%%%%%%%%%%%%%%%%%%%%%%
\begin{thm}[Convergence to Burgers]\label{thm:Convergence}
Let $s>\frac32$ and $u_0\in H^s(\R)$.  
There exists $T=T(\|u_0\|_{H^s})>0$ such that, for every $\ell\in(0,1)$, the solutions $u^\ell$ and $u$ of the Camassa--Holm equation \eqref{CH} and the Burgers equation \eqref{B}, respectively, satisfy $u^\ell,u\in C\bigl([0,T];H^s(\R)\bigr)$.
Moreover, it holds that 
\begin{equation*}
	\lim_{\ell \to 0} \|u^\ell-u\|_{L^\infty([0,T];H^s(\R))}=0.
\end{equation*}
\end{thm}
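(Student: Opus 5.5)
This result is quoted from \cite{LiYuZhu2023}. Our plan is a standard energy argument in which every estimate is made uniform in $\ell\in(0,1)$. We write the equation as the transport equation
\begin{equation*}
u^\ell_t+3u^\ell u^\ell_x=F_\ell(u^\ell),\qquad F_\ell(u)\eqdef u u_x - \ell^2\partial_x G_\ell\ast\tfrac12 u_x^2 - \partial_x\bigl(G_\ell\ast u^2 - u^2\bigr),
\end{equation*}
where we used $Q^\ell_x=(u^2)_x+\partial_x(G_\ell\ast u^2-u^2)$ in \eqref{CH}. The point is that $G_\ell\ast$ is a contraction on every $H^r$ and commutes with $\Lambda^s=(1-\partial_x^2)^{s/2}$. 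The symbol of $\ell^2\partial_x G_\ell\ast$ is $\ell^2 i\xi/(1+\ell^2\xi^2)$, which has modulus at most $\min(\ell^2|\xi|,\ell/2)\leqslant|\xi|$. Hence $\ell^2\partial_x G_\ell\ast$ maps $H^{s-1}$ into $H^{s}$ with a bound independent of $\ell\in(0,1)$ (indeed it gains one derivative with constant $\leqslant 1$). Likewise $G_\ell\ast-1$ has symbol $-\ell^2\xi^2/(1+\ell^2\xi^2)$, which is bounded by $1$.

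\textbf{Step 1: uniform bound.} We apply $\Lambda^s$, multiply by $\Lambda^s u^\ell$ and integrate. The transport term is handled by the Kato--Ponce commutator estimate, giving $\lesssim\|u_x\|_{L^\infty}\|u\|_{H^s}^2$. The nonlocal terms are bounded using the symbol bounds above and the algebra property of $H^{s-1}$ for $s>3/2$. For instance, $\|\ell^2\partial_xG_\ell\ast u_x^2\|_{H^s}\lesssim\|u_x^2\|_{H^{s-1}}\lesssim\|u\|_{H^s}^2$, and $\|\partial_x(u^2)\|_{H^s}$ is controlled after first pairing the local part $-2uu_x$ with the transport term. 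Since $F_\ell$ also contains $uu_x$, the local quadratic part of the equation is exactly $u_t+3uu_x-uu_x-2uu_x+\dots=u_t+\text{nonlocal}$. It is cleaner to keep the original form $u_t+uu_x+\ell^2P_x+Q_x=0$ with $Q_x=G_\ell\ast(2uu_x)$. Then $\|Q_x\|_{H^s}\leqslant\|2uu_x\|_{H^s}$, which is still one derivative too many. The fix is the commutator trick: pair $G_\ell\ast(uu_x)$ with $\Lambda^{2s}u$ and move $u$ through. Here $[\Lambda^s G_\ell\ast,u]\partial_x$ is bounded on $H^s$ uniformly in $\ell$ because $G_\ell$ is a uniformly bounded Fourier multiplier of order $0$ with symbol derivatives $|\partial_\xi^k m_\ell|\lesssim\langle\xi\rangle^{-k}$ uniformly in $\ell$. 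The remaining term $\int G_\ell\ast(u\,\partial_x\Lambda^s u)\Lambda^su$ is the delicate one. We write it as $\int u\,\partial_x\Lambda^s u\,(G_\ell\ast\Lambda^s u)$ and integrate by parts using $\mathcal{H}_\ell$ structure; alternatively, we work with the conserved-type quantity $\|\Lambda^s u\|^2_{L^2}+\ell^2\|\Lambda^s u_x\|^2_{L^2}=\langle \Lambda^s u,\Lambda^s m\rangle$ and the momentum equation \eqref{momentum eq CH}, for which the transport commutator closes directly. Either way we obtain
\begin{equation*}
\tfrac{\ud}{\ud t}E_s\leqslant C\|u^\ell\|_{H^s}E_s,\qquad E_s\simeq\|u^\ell\|_{H^s}^2 \text{ uniformly in }\ell,
\end{equation*}
so a Riccati comparison yields $T=T(\|u_0\|_{H^s})$ and $\sup_{[0,T]}\|u^\ell\|_{H^s}\leqslant 2\|u_0\|_{H^s}$. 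The same estimate holds for Burgers, and by Theorem \ref{thm:CH-LWP} the solution exists up to $T$.

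\textbf{Step 2: convergence.} We estimate $w=u^\ell-u$ first in a lower norm, $H^{s-1}$ (or $L^2$ if $s-1$ is small). The difference equation contains the source $\ell^2P^\ell_x+(G_\ell\ast-1)\partial_x(u^\ell)^2$. Its $H^{s-1}$ norm is $O(\ell)$ for the first part, by the symbol bound $\ell/2$, and $o(1)$ for the second part: the symbol bound $\min(1,\ell^2\xi^2)$ gives decay for fixed $H^s$ data, and it is uniform on compact sets of $C([0,T];H^s)$. Gronwall then gives $w\to0$ in $C([0,T];H^{s-1})$. Finally we upgrade to $H^s$ by the Bona--Smith argument: we regularize $u_0$ to $u_0^\delta$, use the uniform-in-$\ell$ continuity from Step 1, and obtain a higher-norm bound $\|u^{\ell,\delta}\|_{H^{s+1}}\lesssim\delta^{-1}$. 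We then interpolate and let $\ell\to0$ and then $\delta\to0$.

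The main obstacle is Step 1: closing the $H^s$ estimate uniformly in $\ell$ despite the nonlocal term $Q^\ell_x$, which formally loses a derivative. We expect to handle it by the momentum-form energy together with the uniform order-zero multiplier commutator bounds.
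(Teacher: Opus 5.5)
The paper does not prove this statement; it quotes it from \cite{LiYuZhu2023}, so your outline has to stand on its own.

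\textbf{What works and what needs correcting.} Step 1 closes along your first route. Set $v=\Lambda^s u$ and $\tilde v=G_\ell\ast v$, so that $v=\tilde v-\ell^2\tilde v_{xx}$. Then
\begin{equation*}
\int_\R u\,v_x\,\tilde v\,\ud x=-\half\int_\R u_x\tilde v^2\,\ud x+\ell^2\int_\R u_x\tilde v\,\tilde v_{xx}\,\ud x-\half\ell^2\int_\R u_x\tilde v_x^2\,\ud x .
\end{equation*}
Each term is bounded by $\|u_x\|_{L^\infty}\|v\|_{L^2}^2$, because $\|\tilde v\|_{L^2},\ \ell\|\tilde v_x\|_{L^2},\ \ell^2\|\tilde v_{xx}\|_{L^2}\leqslant\|v\|_{L^2}$.

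The route you say you expect to use, the momentum-form energy, does not work as stated. The quantity $\langle\Lambda^s u,\Lambda^s m\rangle=\|u\|_{H^s}^2+\ell^2\|u_x\|_{H^s}^2$ is not equivalent to $\|u\|_{H^s}^2$ uniformly in $\ell$. It is even infinite at $t=0$ unless $u_0\in H^{s+1}$.

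In Step 2, you justify $\|(G_\ell\ast-1)\partial_x(u^\ell)^2\|_{H^{s-1}}\to0$ by uniformity on compact subsets of $C([0,T];H^s)$. This is circular, since precompactness of $\{u^\ell\}$ in $H^s$ is essentially the conclusion. The $L^2$ version is fine: the symbol of $(G_\ell\ast-1)\partial_x$ is bounded by $(\ell|\xi|)^\theta|\xi|$ for every $\theta\in[0,2]$. This yields $\|u^\ell-u\|_{L^\infty([0,T];L^2)}=O(\ell^{\min(1,s-1)})$.

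\textbf{The genuine gap: the upgrade to $H^s$.} Consider
\begin{equation*}
\|u^\ell-u\|_{H^s}\leqslant\|u^\ell-u^{\ell,\delta}\|_{H^s}+\|u^{\ell,\delta}-u^{\delta}\|_{H^s}+\|u^{\delta}-u\|_{H^s}.
\end{equation*}
Interpolating the $L^2$ rate against the $H^{s+1}$ bound controls only the middle term, for fixed $\delta$. Continuity of the Burgers flow controls the last term. The first term must tend to zero as $\delta\to0$ \emph{uniformly in} $\ell$. You attribute this to ``the uniform-in-$\ell$ continuity from Step 1'', but Step 1 is only a bound and says nothing about differences of solutions.

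What is missing is a uniform-in-$\ell$ difference estimate for two CH solutions. With $v=u^\ell-u^{\ell,\delta}$ and constants independent of $\ell$ and $\delta$, you need:
\begin{itemize}
\item a Lipschitz bound $\|v\|_{L^\infty([0,T];H^{s-1})}\lesssim\|u_0-u_0^\delta\|_{H^{s-1}}=o(\delta)$;
\item an $H^s$ inequality $\frac{\ud}{\ud t}\|v\|_{H^s}\lesssim\|v\|_{H^s}+\|v\|_{H^{s-1}}\|u^{\ell,\delta}\|_{H^{s+1}}$, whose last term is $o(\delta)\cdot O(\delta^{-1})$.
\end{itemize}
Both estimates require handling the nonlocal transport piece $G_\ell\ast\bigl((u^\ell+u^{\ell,\delta})v_x\bigr)$ of $Q^\ell_x-Q^{\ell,\delta}_x$ with the same $\mathcal{H}_\ell$ integration by parts as in Step 1; otherwise it loses a derivative.

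This step cannot be waved through. By \cite{LiYuZhu2024Nonuniform} the convergence is not uniform with respect to the initial datum. So the proof must use information specific to $u_0$, here the rates at which $u_0^\delta\to u_0$ in $H^s$ and $H^{s-1}$. Your sketch, as written, contains no such ingredient.
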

%%%%%%%%%%%%%%%%%%%%%%%%%%%%%%%%%%%%%%%%%%%%%%%%%%%%%%%%%%%%%%%%%%%%%%%%

This theorem gives a time $T>0$, independent of $\ell$, on which the CH solutions converge strongly to the classical Burgers solutions as $\ell \to 0$.
The aim of this paper is to show that such strong convergence cannot in general be continued beyond the Burgers blow-up time.
To that end, we recall the following sufficient condition for wave breaking (see Theorem 4.2 in \cite{ConstantinEscher1998WaveBreaking}).

%%%%%%%%%%%%%%%%%%%%%%%%%%%%%%%%%%%%%%%%%%%%%%%%%%%%%%%%%%%%%%%%%%%%%%%%
\begin{thm}[A sufficient condition for wave breaking]\label{thm:Blow-up}
Assume $u_0 \in H^3(\R)$. If there exists $x_0 \in \R$ such that
\begin{equation*}
u_0'(x_0) < -\frac{1}{\sqrt{2} \ell^{3/2}} \left(\int_\R (u_0^2 + \ell^2 (u_0')^2)\, \ud x \right)^{1/2},
\end{equation*}
then the corresponding solution of \eqref{CH} blows up in finite time.
\end{thm}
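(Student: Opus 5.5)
The plan is to follow the classical Riccati-type argument of Constantin and Escher for the slope at its minimum, adapted to the scaling in $\ell$. By Theorem \ref{thm:CH-LWP} (and persistence of $H^3$ regularity) the solution is $C^1$ in time with values in $H^2$, so $u^\ell_x$ is $C^1$ in $x$. We therefore set
\begin{equation*}
M(t) \eqdef \inf_{x\in\R} u^\ell_x(t,x).
\end{equation*}
This infimum is attained for $M<0$ because $u_x^\ell(t,\cdot)\to0$ at infinity. By the standard lemma of Constantin--Escher, $M$ is locally Lipschitz, and a.e.\ $M'(t)=u^\ell_{xt}(t,\xi(t))$, where $\xi(t)$ is a minimum point. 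At $\xi(t)$ we also have $u^\ell_{xx}=0$.

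\textbf{Step 1: the slope equation.} We differentiate \eqref{CH} in $x$ and use $\ell^2 G_\ell''=G_\ell-\delta$. This gives $\ell^2 P_{xx}=P-\frac12 u_x^2$ and $\ell^2 Q_{xx}=Q-u^2$, hence
\begin{equation*}
u_{xt}+uu_{xx} = -\tfrac12 u_x^2 + \tfrac{1}{\ell^2}\Bigl(u^2 - G_\ell\ast\bigl(u^2+\tfrac{\ell^2}{2}u_x^2\bigr)\Bigr),
\end{equation*}
where we drop the superscript $\ell$. Evaluating at $\xi(t)$ yields
\begin{equation*}
M' \leqslant -\tfrac12 M^2 + \tfrac{1}{\ell^2}\Bigl(u^2 - G_\ell\ast\bigl(u^2+\tfrac{\ell^2}{2}u_x^2\bigr)\Bigr)(t,\xi(t)).
\end{equation*}

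\textbf{Step 2: the key convolution inequality.} Next we prove $G_\ell\ast\bigl(u^2+\frac{\ell^2}2u_x^2\bigr)\geqslant\frac12u^2$ pointwise. We split the convolution into $y<x$ and $y>x$. On $y<x$ we use the pointwise bound
\begin{equation*}
u^2+\tfrac{\ell^2}{2}u_x^2 \geqslant \tfrac12u^2 + \ell u u_x .
\end{equation*}
The integrand then becomes $\frac{1}{2\ell}\cdot\frac12\,\ell\,\partial_y\bigl(e^{(y-x)/\ell}u^2\bigr)$, whose integral equals $\frac14u^2(x)$. On $y>x$ the symmetric bound with $-\ell uu_x$ gives another $\frac14u^2(x)$. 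Consequently the bracket in Step 1 is at most $\frac12 u^2$.

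\textbf{Step 3: the $L^\infty$ bound and the Riccati argument.} From $u^2(x)=\int_{-\infty}^x 2uu_x$ and $2|uu_x|\leqslant \ell^{-1}(u^2+\ell^2u_x^2)$, and averaging the two half-line versions, we get $\|u\|_\infty^2\leqslant E/(2\ell)$. Here $E=\int(u_0^2+\ell^2u_0'^2)$ is the conserved energy. Combining the steps yields
\begin{equation*}
M'\leqslant -\tfrac12 M^2+\tfrac{E}{4\ell^3}.
\end{equation*}
The hypothesis gives $M(0)\leqslant u_0'(x_0)<-K$ with $K\eqdef(E/(2\ell^3))^{1/2}$, the positive root of $\frac12M^2=\frac{E}{4\ell^3}$. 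Since $M'<0$ while $M<-K$, the function $M$ stays below $M(0)$. We then compare with the Riccati ODE: writing $\frac12M^2-\frac12K^2\geqslant c\,M^2$ with $c=\frac12\bigl(1-K^2/M(0)^2\bigr)>0$, we obtain $M'\leqslant -cM^2$. Hence $M\to-\infty$ before time $1/(c|M(0)|)$, and by the blow-up criterion of Theorem \ref{thm:CH-LWP} we get $T^*_\ell<\infty$.

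The main obstacle is Step 2: the constant $\frac12$, obtained with the weight $\frac{\ell^2}{2}$ rather than $\ell^2$, is exactly what produces the threshold $\frac{1}{\sqrt2}\ell^{-3/2}E^{1/2}$. A cruder bound (dropping $P$ entirely) loses a factor $\sqrt2$. The technical point about differentiability of $M$ is routine and can be quoted from Constantin--Escher.
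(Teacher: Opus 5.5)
Your proof is correct, but it takes a different route from the paper. The paper gives no computation: it quotes Theorem 4.2 of Constantin--Escher for $\ell=1$ and says the general case follows by a change of variables. That change of variables is $u^\ell(t,x)=v(t/\ell,x/\ell)$, which maps \eqref{CH high der} with parameter $\ell$ to the case $\ell=1$. With $v_0(y)=u_0(\ell y)$ one has $v_0'(y)=\ell u_0'(\ell y)$ and $\|v_0\|_{H^1}^2=\ell^{-1}\int_\R(u_0^2+\ell^2(u_0')^2)\,\ud x$, so the condition $v_0'(y_0)<-\|v_0\|_{H^1}/\sqrt2$ becomes exactly the stated threshold.

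You instead run the Constantin--Escher argument directly with $\ell$ kept explicit:
\begin{itemize}
\item the slope equation, which is \eqref{CHx} rewritten;
\item the convolution inequality $G_\ell\ast(u^2+\frac{\ell^2}{2}u_x^2)\geqslant\frac12u^2$, obtained by completing squares on the two half-lines;
\item the bound $\|u\|_{L^\infty}^2\leqslant E/(2\ell)$;
\item the Riccati comparison.
\end{itemize}
All the constants check out and reproduce $K=(E/(2\ell^3))^{1/2}$.

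The citation-plus-scaling route is shorter and leaves the delicate points (the convolution inequality, the a.e.\ differentiability of $M$) to the literature. Your route is self-contained and shows exactly where each power of $\ell$ and the factor $\sqrt2$ come from. It also yields the explicit lifespan bound $T_\ell^*\leqslant 1/(c|M(0)|)$. You still quote the Lipschitz and a.e.-differentiability lemma for $M$, but you correctly justify its hypothesis: $u_0\in H^3$ gives $u^\ell\in C^1([0,T);H^2(\R))$.
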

%%%%%%%%%%%%%%%%%%%%%%%%%%%%%%%%%%%%%%%%%%%%%%%%%%%%%%%%%%%%%%%%%%%%%%%%

Theorem 4.2 in \cite{ConstantinEscher1998WaveBreaking} gives a sufficient condition for wave breaking in the case $\ell=1$, see also \cite{Yin2004}. 
The general result given by Theorem \ref{thm:Blow-up} follows by a simple change of variables.  
We finally recall the global smooth existence result for the Camassa--Holm equation when the initial momentum
$m^\ell_0=u_0-\ell^2u_0''$ does not change sign, see
\cite{ConstantinEscher1998Global} and Theorem 4.1 in
\cite{RodriguezBlanco2001}.

%%%%%%%%%%%%%%%%%%%%%%%%%%%%%%%%%%%%%%%%%%%%%%%%%%%%%%%%%%%%%%%%%%%%%%%%
\begin{thm}[Global existence for one-sign momentum] \label{thm:CH-global-momentum}
Let $\ell>0$, $s>\frac32$, and $u_0\in H^s(\R)\cap L^1(\R)$. 
Assume that the initial momentum $m_0^\ell$ does not change sign (in the sense of distributions).
Then, the solution given by Theorem \ref{thm:CH-LWP} exists globally, $m^\ell$ has the same sign for all $t>0$, and
\begin{equation*}
	u^\ell \in C([0,\infty); H^s(\R)) \cap C^1([0,\infty); H^{s-1}(\R)).
\end{equation*}
\end{thm}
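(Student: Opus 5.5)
The plan is the classical Constantin--Escher argument: transport the sign of $m^\ell$ along characteristics, turn the sign into a pointwise slope bound $\ell|u^\ell_x|\leqslant|u^\ell|$, and rule out blow-up through the criterion of Theorem \ref{thm:CH-LWP}. By the symmetry $u\mapsto -u$, $x\mapsto -x$ of \eqref{CH high der}, which maps $m\mapsto -m$, it suffices to treat $m_0^\ell\geqslant0$. I would first work with smooth data, say $u_0\in H^3(\R)\cap L^1(\R)$, so that $m^\ell_0\in H^1(\R)$ is continuous. The general case $s>\frac32$ is handled at the end by density.

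\textbf{Step 1: sign preservation.} Let $T_\ell^*$ be the maximal time given by Theorem \ref{thm:CH-LWP}. Define the flow $q(t,x)$ by
\begin{equation*}
q_t = u^\ell(t,q), \qquad q(0,x)=x .
\end{equation*}
Since $u^\ell\in C([0,T];H^3)$ for every $T<T^*_\ell$, and $u^\ell_x$ is bounded there, $q(t,\cdot)$ is a $C^1$ increasing diffeomorphism of $\R$ with
\begin{equation*}
q_x(t,x)=\exp\Bigl(\int_0^t u^\ell_x(s,q(s,x))\,\ud s\Bigr)>0 .
\end{equation*}
Using \eqref{momentum eq CH}, a direct differentiation gives
\begin{equation*}
\tfrac{\ud}{\ud t}\bigl[m^\ell(t,q)\,q_x^2\bigr]=q_x^2\bigl(m^\ell_t+u^\ell m^\ell_x+2u^\ell_x m^\ell\bigr)=0 .
\end{equation*}
Hence $m^\ell(t,q(t,x))\,q_x(t,x)^2=m_0^\ell(x)\geqslant0$. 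As $q(t,\cdot)$ is onto, $m^\ell(t,\cdot)\geqslant0$ for all $t<T^*_\ell$.

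\textbf{Step 2: slope bound.} Writing $u^\ell=G_\ell\ast m^\ell$ explicitly,
\begin{equation*}
u^\ell(x)=\tfrac1{2\ell}\Bigl(\mathrm e^{-x/\ell}\!\int_{-\infty}^x\mathrm e^{y/\ell}m^\ell\,\ud y+\mathrm e^{x/\ell}\!\int_x^{\infty}\mathrm e^{-y/\ell}m^\ell\,\ud y\Bigr).
\end{equation*}
Differentiating produces the same two integrals with signs $-,+$ and an extra factor $1/\ell$. Since both integrals are nonnegative, this gives
\begin{equation*}
\ell\,|u^\ell_x|\leqslant u^\ell .
\end{equation*}
Next, for data in $H^3$ the energy $E=\int_\R\bigl((u^\ell)^2+\ell^2(u^\ell_x)^2\bigr)\ud x$ is conserved: multiply \eqref{CH high der} by $u^\ell$ and integrate. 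Together with the one-dimensional Sobolev bound $\|u^\ell\|_{L^\infty}^2\leqslant E/(2\ell)$, this yields
\begin{equation*}
u^\ell_x\geqslant-\ell^{-1}\|u^\ell\|_{L^\infty}\geqslant-\ell^{-3/2}(E/2)^{1/2}\qquad\text{on }[0,T^*_\ell).
\end{equation*}
Thus $\inf_x u^\ell_x$ stays bounded below, and the blow-up alternative in Theorem \ref{thm:CH-LWP} forces $T^*_\ell=\infty$. The stated regularity then follows, since $T^*_\ell$ does not depend on $s$.

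\textbf{Step 3: rough data.} For general $s>\frac32$, where $m^\ell_0\in H^{s-2}$ is only a distribution of one sign, I would mollify $u_0^\varepsilon=\rho_\varepsilon\ast u_0$. Then $m^\ell_0{}^{,\varepsilon}=\rho_\varepsilon\ast m^\ell_0\geqslant0$ is smooth, and Steps 1--2 apply to $u_0^\varepsilon$. Moreover:
\begin{itemize}
\item the lower bound on $u^{\varepsilon}_x$ is uniform in $\varepsilon$, because $E(u_0^\varepsilon)\to E(u_0)$;
\item the continuous dependence in Theorem \ref{thm:CH-LWP} passes both the bound and the sign $m^\ell\geqslant0$ (in $\mathcal D'$) to the limit on every $[0,T]$ with $T<T^*_\ell$.
\end{itemize}
The blow-up criterion again gives $T^*_\ell=\infty$. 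The $L^1$ assumption is not needed in this argument; in the classical references it is used to guarantee $m^\ell(t)\in L^1$, so that $\int m^\ell$ is conserved.

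\textbf{Main obstacle.} I expect the delicate point to be this approximation step. It requires the sign and the slope bound to be stable under $H^s$-limits with $s$ close to $\frac32$, and it requires energy conservation to hold at this low regularity. I would obtain energy conservation from the smooth case via continuity of the data-to-solution map, rather than by attempting a direct computation with rough solutions.
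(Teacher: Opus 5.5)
Your proposal is correct and is essentially the classical Constantin--Escher/Rodr\'{\i}guez-Blanco argument that the paper cites without proof: transport the sign of $m^\ell$ along characteristics, deduce $\ell|u^\ell_x|\leqslant u^\ell$, bound the slope from below via the energy, and invoke the blow-up alternative, with an approximation step for rough data. The ``main obstacle'' you flag is already resolved within your own Step 3. You use energy conservation only for the mollified data, and $E(u_0^\varepsilon)\leqslant E(u_0)$ by Young's inequality, so no low-regularity energy identity is ever needed.
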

%%%%%%%%%%%%%%%%%%%%%%%%%%%%%%%%%%%%%%%%%%%%%%%%%%%%%%%%%%%%%%%%%%%%%%%%

%%%%%%%%%%%%%%%%%%%%%%%%%%%%%%%%%%%%%%%%%%%%%%%%%%%%%%%%%%%%%%%%%%%%%%%%
\section{Proof of the main result}\label{sec:proof}
%%%%%%%%%%%%%%%%%%%%%%%%%%%%%%%%%%%%%%%%%%%%%%%%%%%%%%%%%%%%%%%%%%%%%%%%

%%%%%%%%%%%%%%%%%%%%%%%%%%%%%%%%%%%%%%%%%%%%%%%%%%%%%%%%%%%%%%%%%%%%%%%%
\subsection*{Uniform estimates}
%%%%%%%%%%%%%%%%%%%%%%%%%%%%%%%%%%%%%%%%%%%%%%%%%%%%%%%%%%%%%%%%%%%%%%%%
Throughout this section, $u_0$ satisfies the assumptions of Theorem \ref{thm:main} and $\ell \in (0,\ell_0)$. 
We use the conserved quantities of the Camassa--Holm equation to derive estimates independent of $\ell$.

We start by writing \eqref{momentum eq CH} in the form
\begin{equation*}
m_t^\ell + (u^\ell m^\ell)_x + \half [(u^\ell)^2]_x -\half \ell^2 [(u_x^\ell)^2]_x = 0.
\end{equation*}
We recall that $m^\ell$ has the same sign for all time $t \geqslant 0$. 
Integrating over $\R$,  we obtain 
\begin{equation}\label{L1 pm}
	\int_\R m^\ell(t) \, \ud x = \int_\R  u^\ell(t)\, \ud x = 	\int_\R  u_0\, \ud x.
\end{equation}
Recall now that 
\begin{equation*}
u^\ell = G_\ell \ast m^\ell, \qquad G_\ell(x) = \frac{1}{2\ell} \mathrm{e}^{-|x|/\ell}.
\end{equation*}
Since $m^\ell$ has one sign, so does $u^\ell$. 
Therefore, \eqref{L1 pm} becomes
\begin{equation}\label{L1}
	\|m^\ell(t)\|_{L^1(\R)} = \|u^\ell(t)\|_{L^1(\R)} = 	\|u_0\|_{L^1(\R)}.
\end{equation}
In addition, we have the pointwise bound
\begin{equation}\label{u_x u}
	\ell |u^\ell_x| = \ell |G_\ell' \ast m^\ell| \leqslant \ell |G_\ell'| \ast |m^\ell| =  |G_\ell \ast m^\ell| = |u^\ell|.
\end{equation}
Differentiating \eqref{CH} with respect to $x$, we obtain 
\begin{equation}\label{CHx}
u_{tx}^\ell + u^\ell u_{xx}^\ell + \half (u_x^\ell)^2 +  P^\ell + \tfrac{Q^\ell- (u^\ell)^2}{\ell^2}  = 0.
\end{equation}
Multiplying now \eqref{CH} by $2 u^\ell$ and \eqref{CHx} by $2 \ell^2 u_x^\ell$, we obtain the local energy conservation law 
\begin{equation}\label{energy local equation}
	\left((u^\ell)^2+\ell^2 (u^\ell_x)^2 \right)_t + \left( 2 \ell^2 u^\ell P^\ell + 2 u^\ell Q^\ell + \ell^2 u^\ell (u^\ell_x)^2 \right)_x = 0.
\end{equation}
Integrating over $\R$, one obtains for any $t>0$ that
\begin{equation}\label{energy equality}
	\int_\R ((u^\ell)^2+\ell^2 (u_x^\ell)^2)\, \ud x = \int_\R (u_0^2+\ell^2 (u'_0)^2)\, \ud x \leqslant \|u_0\|^2_{H^1(\R)} .
\end{equation}
The cubic Hamiltonian
\begin{equation*}
	 H_\ell(t) \eqdef \int_{\mathbb R} \left((u^\ell)^3 + \ell^2 u^\ell(u^\ell_x)^2\right) \ud x
\end{equation*}
is conserved \cite{CamassaHolm1993}. 
Since $u^\ell$ has one sign, we obtain
\begin{align}\nonumber
\| u^\ell(t) \|_{L^3(\R)}^3 &=\int_{\mathbb R}|u^\ell(t)|^3\, \ud x \leqslant \left| H_\ell(t)\right| = \left| H_\ell(0)\right| \\ \label{L3}
&= \int_{\mathbb R} \left( |u_0|^3 + \ell^2 |u_0| (u'_0)^2 \right) \ud x \leqslant 2 \int_{\mathbb R} |u_0|^3 \ud x,
\end{align}
where the last inequality follows from $\ell^2 (u'_0)^2 \leqslant |u_0|^2$.
Using \eqref{L1}, \eqref{u_x u} and \eqref{L3} we deduce that for any $p \in [1,3]$, it holds that
\begin{equation}\label{uniform estimates}
	\|u^\ell\|_{L^\infty([0,\infty); L^p(\R))} + \|\ell u_x^\ell\|_{L^\infty([0,\infty); L^p(\R))} \lesssim_{p,u_0} 1.
	\end{equation} 
We now show that the energy flux in \eqref{energy local equation} is uniformly bounded in $L^\infty([0,\infty); L^1(\R))$. Indeed, since $\|G_\ell\|_{L^1(\R)} = 1$, we have 
\begin{align*}
	\| \ell^2 u^\ell P^\ell  \|_{L^1(\R)} &\leqslant \|u^\ell\|_{L^3(\R)} \| \ell^2  P^\ell  \|_{L^{3/2}(\R)} \leqslant \|u^\ell\|_{L^3(\R)} \| \ell  u_x^\ell  \|^2_{L^3(\R)} \lesssim_{u_0} 1, \\
	\| u^\ell Q^\ell  \|_{L^1(\R)} &\leqslant \|u^\ell\|_{L^3(\R)} \| Q^\ell  \|_{L^{3/2}(\R)} \leqslant \|u^\ell\|_{L^3(\R)}^3 \lesssim_{u_0} 1, \\
	\| \ell^2 u^\ell (u^\ell_x)^2   \|_{L^1(\R)} &\leqslant \|u^\ell\|_{L^3(\R)}^3 \lesssim_{u_0} 1.
\end{align*}
Summing up, we obtain 
\begin{equation}\label{energy flux}
\left\| 2 \ell^2 u^\ell P^\ell + 2 u^\ell Q^\ell + \ell^2 u^\ell (u^\ell_x)^2  \right\|_{L^\infty([0,\infty); L^1(\R))}	\lesssim_{u_0} 1.
\end{equation}

%%%%%%%%%%%%%%%%%%%%%%%%%%%%%%%%%%%%%%%%%%%%%%%%%%%%%%%%%%%%%%%%%%%%%%%%
\subsection*{Energy tightness and strong $L^2$ convergence}
%%%%%%%%%%%%%%%%%%%%%%%%%%%%%%%%%%%%%%%%%%%%%%%%%%%%%%%%%%%%%%%%%%%%%%%%

Choose $\chi_R \in C^\infty(\R)$ such that
\begin{equation*}
	0 \leqslant \chi_R \leqslant 1,  \qquad \chi_R = 0\ \text{on } |x| \leqslant R/2,  \qquad \chi_R = 1\ \text{on } |x| \geqslant R, \qquad \text{and} \qquad |\chi_R'| \leqslant C/R.
\end{equation*}
Multiplying \eqref{energy local equation} by $\chi_R $ and using \eqref{energy flux}, one gets 
\begin{equation*}
	\frac{\ud\ }{\ud t} \int_\R \left((u^\ell)^2+\ell^2 (u^\ell_x)^2 \right) \chi_R\, \ud x \lesssim \frac{1}{R}.
\end{equation*}
Integrating over $(0,t)$, for $t_2 \geqslant t$, gives
\begin{equation}\label{tightness}
	\int_{\{|x| \geqslant R\}} \left((u^\ell)^2+\ell^2 (u^\ell_x)^2 \right) \ud x  \leqslant \int_{\{|x| \geqslant R/2 \}}  \left(u_0^2+\ell_0^2 (u'_0)^2 \right) \ud x  + \frac{C t_2}{R}.
\end{equation}
The right-hand side goes to zero as $R \to \infty$, uniformly in $\ell \in (0,\ell_0)$ and $t \in [0,t_2]$.

Assume, for a contradiction, that \eqref{convergence in L1} holds for some $t_2>t_1 \geqslant T_{\mathrm B}^*$ and some subsequence $\ell_n \to 0$. 
For the sake of simplicity, we write $u^\ell$ for this subsequence. 
Using \eqref{convergence in L1}, \eqref{uniform estimates} and \eqref{Lp estimates}, we deduce that the convergence holds in $L^2_{\mathrm{loc}}([t_1,t_2] \times \R)$
\begin{equation*}
u^{\ell} \to u \qquad \text{strongly in } L^2_{\mathrm{loc}}\bigl([t_1,t_2] \times \R \bigr).
\end{equation*}
Moreover, 
\begin{equation*}
	\|u^\ell - u\|_{L^2([t_1,t_2] \times \R)}^2 \leqslant  \int_{t_1}^{t_2} \int_{\{|x| < R\}} |u^\ell - u|^2 \,  \ud x\, \ud t + 
	2 \int_{t_1}^{t_2} \int_{\{|x| \geqslant R\}} \left((u^\ell)^2 + (u)^2 \right)  \ud x\, \ud t.
\end{equation*}
Taking $\ell \to 0$ and then $R \to \infty$ and using \eqref{tightness} with \eqref{Lp estimates}, we deduce that 
\begin{equation}\label{convergence in L2}
u^{\ell} \to u \qquad \text{strongly in } L^2 \bigl([t_1,t_2] \times \R \bigr).
\end{equation}

%%%%%%%%%%%%%%%%%%%%%%%%%%%%%%%%%%%%%%%%%%%%%%%%%%%%%%%%%%%%%%%%%%%%%%%%
\subsection*{The defect density}
%%%%%%%%%%%%%%%%%%%%%%%%%%%%%%%%%%%%%%%%%%%%%%%%%%%%%%%%%%%%%%%%%%%%%%%%

By Theorem \ref{thm:Entropy solutions}, we have 
\begin{equation*}
	\int_{t_1}^{t_2}  \int_\R u^2\, \ud x\, \ud t < (t_2-t_1)  \int_\R u_0^2\, \ud x.
\end{equation*}
Consequently, the energy conservation \eqref{energy equality} together with \eqref{convergence in L2} yields
\begin{equation}\label{total mass}
	\lim_{\ell \to 0} \int_{t_1}^{t_2} \int_\R \ell^2(u_x^\ell)^2\, \ud x \, \ud t = (t_2-t_1)  \int_\R u_0^2\, \ud x - \int_{t_1}^{t_2}  \int_\R u^2\, \ud x\, \ud t > 0.
\end{equation} 
Define 
\begin{equation*}
	w^\ell \eqdef \ell^2 (u_x^\ell)^2
\end{equation*}
which is uniformly bounded in $L^\infty([t_1,t_2]; L^{3/2}(\R))$ due to \eqref{uniform estimates}. 
Let $w$ be a weak limit of $w^\ell$ in $L^{3/2}([t_1,t_2] \times \R)$ (we may pass to a subsequence if necessary). 
Using \eqref{total mass} and the tightness of the energy \eqref{tightness}, we obtain that 
\begin{align}\nonumber
0 < \lim_{\ell \to 0}	\int_{t_1}^{t_2} \int_\R  w^\ell \, \ud x \, \ud t 
&= \lim_{R \to \infty} \lim_{\ell \to 0}	\int_{t_1}^{t_2} \int_{\{|x| < R\}} w^\ell \, \ud x \, \ud t + \lim_{R \to \infty} \lim_{\ell \to 0}	\int_{t_1}^{t_2} \int_{\{|x| \geqslant R\}} w^\ell \, \ud x \, \ud t\\ \label{total mass0}
&=\int_{t_1}^{t_2} \int_\R  w \, \ud x \, \ud t,
\end{align}
where the first limit is obtained due to the weak convergence of $w^\ell$ to $w$ in $L^{3/2}([t_1,t_2] \times \R)$, which can be tested against $1$ on compact sets. The second limit vanishes due to \eqref{tightness}.

%%%%%%%%%%%%%%%%%%%%%%%%%%%%%%%%%%%%%%%%%%%%%%%%%%%%%%%%%%%%%%%%%%%%%%%%
\subsection*{Failure of convergence to the Burgers equation}
%%%%%%%%%%%%%%%%%%%%%%%%%%%%%%%%%%%%%%%%%%%%%%%%%%%%%%%%%%%%%%%%%%%%%%%%

We now pass to the limit in \eqref{CH}. 
Let $\varphi \in C^\infty_c((t_1,t_2) \times \R )$ be a test function. 
Then 
\begin{gather*}
\int_{t_1}^{t_2} \int_\R \ell^2 P^\ell \varphi\, \ud x \, \ud t  = \int_{t_1}^{t_2} \int_\R \half w^\ell (G_\ell \ast \varphi) \, \ud x\, \ud t , \\ 
\int_{t_1}^{t_2} \int_\R Q^\ell \varphi\, \ud x \, \ud t  = \int_{t_1}^{t_2} \int_\R (u^\ell)^2 (G_\ell \ast \varphi) \, \ud x\, \ud t .
\end{gather*}
Using \eqref{uniform estimates}, \eqref{Lp estimates} and \eqref{convergence in L2}, we obtain
\begin{equation*}
	(u^\ell)^2 \to u^2 \quad \text{in } L^1([t_1,t_2] \times \R).
\end{equation*}
Since $G_\ell \ast \varphi \to \varphi$ uniformly, we obtain, as $\ell \to 0$, that
\begin{equation*}
\ell^2 P^\ell \rightharpoonup \half w, \qquad Q^\ell \rightharpoonup u^2,
\end{equation*}
in the sense of distributions. 
Passing to the limit in \eqref{CH} gives
\begin{equation*}
	u_t +  \left(\tfrac32 u^2\right )_x = - \half w_x.
\end{equation*}
Since, $u$ is the entropy solution of the Burgers equation, then $w_x=0$ on $(t_1,t_2) \times \R$.
Therefore $w$ is a function of time.
Due to the energy estimate, the integral of $w$ over $\R$ must be finite for almost all time, therefore $w=0$ on $(t_1,t_2) \times \R$.
This contradicts \eqref{total mass0} and ends the proof of Theorem \ref{thm:main}.
\qed

%%%%%%%%%%%%%%%%%%%%%%%%%%%%%%%%%%%%%%%%%%%%%%%%%%%%%%%%%%%%%%%%%%%%%%%%

%%%%%%%%%%%%%%%%%%%%%%%%%%%%%%%%%%%%%%%%%%%%%%%%%%%%%%%%%%%%%%%%%%%%%%%%

\end{document}